
\documentclass[11pt,leqno]{article}

\usepackage{amsmath,amssymb,amsthm}
\usepackage[all]{xy}
\usepackage{lscape}

\makeatletter


\newtheorem*{Theorem}{Theorem~\ref{main.thm}}

\newtheorem{theorem}{Theorem}[section]

\newtheorem{lemma}[theorem]{Lemma}

\theoremstyle{definition}

\newtheorem{example}[theorem]{Example}
\newtheorem{definition}[theorem]{Definition}

\theoremstyle{remark}

\theoremstyle{remark}
\newtheorem{remark}[theorem]{Remark}

\def\({{\rm (}}
\def\){{\rm )}}

\let\Mathrm\operator@font
\let\Cal\mathcal
\let\Bbb\mathbb

\def\standop#1{\mathop{\Mathrm #1}\nolimits}
\def\difstop#1#2{\expandafter\def\csname #1\endcsname{\standop{#2}}}
\def\defstop#1{\difstop{#1}{#1}}

\defstop{AB}
\defstop{ann}
\defstop{Ass}
\defstop{Add}
\defstop{Alt}
\defstop{Ass}
\difstop{adim}{AFHdim}

\defstop{Bl}

\defstop{CMFI}
\defstop{codim}
\defstop{Coh}
\defstop{coht}
\defstop{Coker}
\defstop{Cone}
\defstop{Cl}
\defstop{Cox}
\defstop{cosk}
\defstop{cd}
\defstop{cmd}
\difstop{charac}{char}

\defstop{depth}
\defstop{Der}
\defstop{Div}
\defstop{div}

\defstop{EM}
\defstop{embdim}
\defstop{End}
\defstop{ev}
\defstop{Ext}

\difstop{fdim}{flat.dim}
\defstop{Flat}
\defstop{Func}
\defstop{Fpqc}

\def\GL{\text{\sl{GL}}}
\defstop{GD}
\defstop{Good}
\defstop{Gal}
\defstop{Grass}

\difstop{height}{ht}
\defstop{Hom}
\defstop{Hy}

\def\id{\mathord{\Mathrm{id}}}

\difstop{Image}{Im}
\defstop{ind}
\defstop{ini}

\defstop{Ker}

\defstop{Lch}
\defstop{length}
\defstop{Lin}
\defstop{Lqc}
\defstop{lqc}
\defstop{LQ}
\defstop{LM}
\defstop{Lie}
\defstop{Loc}

\defstop{Mat}
\defstop{Max}
\defstop{Min}
\defstop{Mod}
\defstop{Mor}
\defstop{MCM}
\defstop{Map}
\difstop{mred}{red}

\defstop{Nerve}
\defstop{NonSFR}
\defstop{NonCMFI}
\defstop{NonNor}
\defstop{Nor}

\defstop{ob}
\defstop{Ob}

\defstop{PA}
\defstop{PM}
\defstop{PR}
\defstop{Proj}
\defstop{Prin}
\defstop{Pic}

\defstop{Qch}
\defstop{qch}

\defstop{rad}
\defstop{rank}
\def\red{_{\Mathrm{red}}}

\defstop{res}
\defstop{Reg}
\difstop{Refl}{Ref}
\defstop{Rat}

\def\SL{\text{\sl{SL}}}

\defstop{Spec}
\defstop{supp}
\defstop{Supp}
\defstop{Sym}
\defstop{Sing}
\defstop{SFR}
\defstop{Soc}
\defstop{Sh}

\difstop{tdeg}{trans.deg}

\defstop{Tor}
\defstop{Tr}
\defstop{TRD}
\difstop{trace}{tr}
\defstop{tot}

\defstop{Zar}
\defstop{FL}
\defstop{add}
\defstop{Ind}
\defstop{grmod}
\defstop{soc}
\def\ad{_{\mathrm{ad}}}
\difstop{summ}{sum}

\def\gl{\mathop{\mathfrak{gl}}\nolimits}

\let\frak\mathfrak
\def\({{\rm(}}
\def\){{\rm)}}

\def\O{\Cal O}

\def\ZZ{{\mathbb{Z}}}

\def\II{{\mathbb{I}}}



\def\sdarrow#1{\downarrow\hbox to 0pt{\scriptsize$#1$\hss}}
\def\suarrow#1{\uparrow\hbox to 0pt{\scriptsize$#1$\hss}}
\def\ssearrow#1{\searrow\hbox to 0pt{\scriptsize$#1$\hss}}


\def\section{\@startsection{section}{1}{\z@ }%
  {-3.5ex plus -1ex minus -.2ex}{2.3ex plus .2ex}{\bf }}

\long\def\refname{\par\kern -3ex
  \begin{center}\rm R\sc{eferences}\end{center}\par\kern 
  -2ex}

\def\@seccntformat#1{\csname the#1\endcsname.\quad}

\def\@@@sect#1#2#3#4#5#6[#7]#8{%
  \ifnum #2>\c@secnumdepth 
  \def \@svsec {}\else \refstepcounter {#1}%
  \def\@svsec{}
  \fi 
  \@tempskipa #5\relax 
  \ifdim \@tempskipa >\z@ 
  \begingroup #6\relax \@hangfrom {\hskip #3\relax 
    \@svsec}{\interlinepenalty \@M #8\par }\endgroup 
  \csname #1mark\endcsname {#7}
  \else 
  \def \@svsechd {#6\hskip #3\@svsec #8\csname #1mark\endcsname {#7}}
  \fi \@xsect {#5}}

\def\@@@startsection#1#2#3#4#5#6{%
  \if@noskipsec \leavevmode \fi \par \@tempskipa #4\relax \@afterindenttrue 
  \ifdim \@tempskipa <\z@ \@tempskipa -\@tempskipa \@afterindentfalse 
  \fi \if@nobreak \everypar {}\else \addpenalty {\@secpenalty }\addvspace 
  {\@tempskipa }\fi \@ifstar {\@ssect {#3}{#4}{#5}{#6}}{\@dblarg 
    {\@@@sect {#1}{#2}{#3}{#4}{#5}{#6}}}}

\def\theparagraph{\thesection.\arabic{paragraph}}
\def\aparagraph{\@@@startsection{paragraph}{2}{\z@ }%
  {1.75ex plus .2ex minus .15ex}{-1em}{\bf(\theparagraph) } }
\def\paragraph{\@@@startsection{paragraph}{2}{\z@ }%
  {1.75ex plus .2ex minus .15ex}{-1em}{}{\bf(\theparagraph)} }

\c@secnumdepth 3
\let\c@theorem\c@paragraph

\advance\textheight 16mm
\voffset -8mm
\advance\textwidth 20mm
\hoffset -10mm

\title{The symmetry of finite group schemes, Watanabe type theorem, and the $a$-invariant of the ring of invariants}
\author{M{\sc itsuyasu} H{\sc ashimoto}\thanks{Partially supported by JSPS KAKENHI Grant number 20K03538
    and MEXT Promotion of Distinctive Joint Research Center Program JPMXP0723833165.}}
\date{Dedicated to Professor Mel Hochster and Professor Craig Huneke}
  
\begin{document}

\maketitle
\footnote[0]
{2020 \textit{Mathematics Subject Classification}. 
  Primary 13A50; Secondary 14L15, 16T05.
  Key Words and Phrases.
  unimodular Hopf algebra, $a$-invariant, canonical module, invariant subring, finite group scheme, small action.
}

\begin{abstract}
  Let $k$ be a field, and $G$ be a $k$-group scheme of finite type.
  Let $G\ad$ be the $k$-scheme $G$ with the adjoint action of $G$.
  We call $\lambda_{G,G}=H^0(\Spec k,e^*(\omega_{G\ad}))$ the Knop character of $G$,
  where $e:\Spec k\rightarrow G\ad$ is the unit element, and $\omega_{G\ad}$ is the
  $G$-canonical module.
  We prove that $\lambda_{G,G}$ is trivial in the following cases:
  (1) $G$ is finite, and $k[G]^*$ is a symmetric algebra;
  (2) $G$ is finite and \'etale;
  (3) $G$ is finite and constant;
  (4) $G$ is smooth and connected reductive;
  (5) $G$ is abelian;
  (6) $G$ is finite, and the identity component $G^\circ$ of $G$ is linearly reductive;
  (7) $G$ is finite and linearly reductive.
  Let $V$ be a small $G$-module of dimension $n<\infty$.
  We assume that $\lambda_{G,G}$ is trivial.
  Let $H=\Bbb G_m$ be the one-dimensional torus, and let $V$ be of degree one as an $H$-module
  so that $S=\Sym V^*$ is a $\tilde G$-algebra generated by degree one elements,
  where $\tilde G=G\times H$.
  We set $A=S^G$.
  Then we have
    (i) $\omega_A\cong\omega_S^G$ as $(H,A)$-modules;
    (ii) $a(A)\leq -n$ in general, where $a(A)$ denotes the $a$-invariant.
  Moreover, the following are equivalent:
  (1) The action $G\rightarrow\GL(V)$ factors through $\SL(V)$;
  (2) $\omega_S\cong S(-n)$ as $(\tilde G,S)$-modules;
  (3) $\omega_S\cong S$ as $(G,S)$-modules;
  (4) $\omega_A\cong A(-n)$ as $(H,A)$-modules;
  (5) $A$ is quasi-Gorenstein;
  (6) $A$ is quasi-Gorenstein and $a(A)=-n$;
  (7) $a(A)=-n$.
  This partly generalizes recent results of Liedtke--Yasuda   {\tt arXiv:2304.14711v2}
  and
  Goel--Jeffries--Singh {\tt arXiv:2306.14279v1}.
\end{abstract}

\section{Introduction}

Let $k$ be a field, $V$ a finite-dimensional $k$-vector space,
and $G$ a finite subgroup of $\GL(V)$.
Let $S=\Sym V^*=k[V]$, and $A=S^G$.
Hochster and Eagon proved that in non-modular case (that is, the case that
the order $|G|$ of $G$ is not divisible by the characteristic of $k$),
$A$ is Cohen--Macaulay.
K.-i.~Watanabe proved that in non-modular case, $G\subset\SL(V)$ if and only if
$G$ does not have a pseudo-reflection and $A$ is Gorenstein \cite{Wat1,Wat2}.
Since then, his result has been generalized by several authors.
Fleischmann and Woodcock \cite{FW} and Braun \cite{Braun} proved that if $G\subset \GL(V)$ is
a finite subgroup without pseudo-reflection, then $A$ is quasi-Gorenstein (or equivalently,
$\omega_A\cong A$) if and only if $G\subset\SL(V)$.

It has been known that the condition that the finite group $G$ does not have a pseudo-reflection
sometimes can be generalized to more general $G$.
The condition is replaced by the condition that $\pi:V=\Spec S\rightarrow\Spec A=V/\!/G$ is a
principal $G$-bundle off codimension two or more, and called an almost principal bundle
or quasi-torsor \cite{Has4,CR}, and we call this condition \lq $V$ is small.'
Namely, we say that $V$ is small
if there exist some open subset $W$ of $\Spec A$ and $G$-stable open subset $U$ of $\pi^{-1}(W)$
such that $\codim(V\setminus U,V)\geq 2$, $\codim(V/\!/G\setminus W,V/\!/G)\geq 2$, and
$\pi:U\rightarrow W$ is a principal $G$-bundle (or a $G$-torsor) in the sense that
$\pi$ is faithfully flat, and $\Phi:G\times U\rightarrow U\times_W U$ given by $\Phi(g,u)=(gu,u)$
is an isomorphism.
Note that if $G$ is a finite constant group, then $V$ is small if and only if $G\subset\GL(V)$,
and $G$ does not have a pseudo-reflection.

Knop \cite{Knop} pointed out that
the equivalence $\omega_A\cong A\iff G\subset\SL(V)$
is not true any more even if $G$ is a (disconnected) reductive
group over an algebraically closed field of characteristic zero, and the action is small.
Letting $\lambda\ad$ be the top exterior power of $\Lie(G)^*$, the dual of the adjoint representation,
the triviality of $\det_V\otimes\lambda\ad^*$ was important \cite[Satz~2]{Knop}.
Note that $\lambda\ad$ is trivial if $G$ is finite, and we can recover Watanabe's original result.

We define $\lambda_{G,G}=H^0(\Spec k,e^*(\omega_{G\ad}))$, and call it the Knop character of $G$,
where $e:\Spec k\rightarrow G\ad$ is the unit element, and $\omega_{G\ad}$ is the $G$-equivariant canonical module of $G\ad$.
If, moreover, $G$ is a normal closed subgroup scheme of another affine $k$-group scheme $\tilde G$
of finite type, then $\lambda_{G,G}$ is a character of $\tilde G$, and we denote it by $\lambda_{\tilde G,G}$.
Note that $\lambda_{G,G}\cong\lambda\ad^*$, if $G$ is $k$-smooth.
By \cite[(11.22)]{Has4}, it is easy to see that if $V$ is small, then $\omega_A\cong A(a)$ if
and only if $\omega_S=S\otimes\det_{V^*}\cong S(a)\otimes_k \lambda_{\tilde G,G}$ if and only if
$\det_V\cong\lambda_{G,G}$ as $G$-modules, and $a=-n$.
If, moreover, $\lambda_{G,G}\cong k$, then $A$ is quasi-Gorenstein if and only if $G\subset\SL(V)$, and
if these conditions are satisfied, then $a(A)=-n$.
So it is natural to ask, when $\lambda_{G,G}$ is trivial.
In \cite[(11.21)]{Has4}, it is pointed out that if $G$ is finite and linearly reductive,
\'etale, or connected reductive, then $\lambda_{G,G}$ is trivial, but $\lambda_{G,G}$ is nontrivial if $k$ is a field of
characteristic not two and $G=O(2)$.

In this paper, we discuss when $\lambda_{G,G}$ is trivial, assuming that $G$ is finite (but not \'etale).
It is well-known that the group algebra $kG$ is symmetric \cite[Example~IV.2.6]{SY}.
A finite dimensional $k$-Hopf algebra is Frobenius in general \cite[Theorem~VI.3.6]{SY}.
In general, a finite dimensional $k$-Hopf algebra $H$ is not symmetric even if $H$ is cocommutative,
or equivalently, $H=k[G]^*$ for some finite $k$-group scheme $G$, see \cite[p.~85]{LS}.
We prove that $\lambda_{G,G}$ is trivial if and only if the notions of the left integral and the
right integral agree in $H=k[G]^*$.
The latter condition is called the unimodular property of $H$.
As the square $s_H^2$ of the antipode $s_H$ of $H$ is the identity, $H$ is unimodular
if and only if $H$ is a symmetric algebra, see \cite{Humphreys, Radford}.

As an application of the $G$-triviality of $\lambda_{G,G}$, 
we prove the following.

\begin{Theorem}
  Let $k$ be a field, $G$ be an affine $k$-group scheme of finite type, and $V$ be a small $G$-module
  of dimension $n<\infty$.
  We assume that $\lambda_{G,G}$ is trivial.
  Let $H=\Bbb G_m$ be the one-dimensional torus, and let $V$ be of degree one as an $H$-module
  so that $S=\Sym V^*$ is a $\tilde G$-algebra generated by degree one elements,
  where $\tilde G=G\times H$.
  We set $A=S^G$.
  Then we have
  \begin{enumerate}
    \item[\rm(i)] $\omega_A\cong\omega_S^G$ as $(H,A)$-modules;
    \item[\rm(ii)] $a(A)\leq -n$ in general, where $a(A)$ denotes the $a$-invariant.
  \end{enumerate}
  Moreover, the following are equivalent:
  \begin{enumerate}
  \item[\rm(1)] The action $G\rightarrow\GL(V)$ factors through $\SL(V)$;
  \item[\rm(2)] $\omega_S\cong S(-n)$ as $(\tilde G,S)$-modules;
  \item[\rm(3)] $\omega_S\cong S$ as $(G,S)$-modules;
  \item[\rm(4)] $\omega_A\cong A(-n)$ as $(H,A)$-modules;
  \item[\rm(5)] $A$ is quasi-Gorenstein;
  \item[\rm(6)] $A$ is quasi-Gorenstein and $a(A)=-n$;
  \item[\rm(7)] $a(A)=-n$.
  \end{enumerate}
\end{Theorem}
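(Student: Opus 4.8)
The plan is to funnel all seven conditions through a single \lq hub' condition $(\star)$: that $\det_V\cong k$ as a $G$-module (which is literally (1)), proving (i) and (ii) along the way. First I would record the $\tilde G$-equivariant structure of $\omega_S$. Since $\Omega_{S/k}\cong S\otimes_k V^*$ with the linear forms $V^*=S_1$, taking top exterior powers gives $\omega_S\cong S\otimes_k\extop V^*\cong S(-n)\otimes_k\det\nolimits_V^{-1}$ as $(\tilde G,S)$-modules, the shift $(-n)$ recording that the generator $dx_1\wedge\cdots\wedge dx_n$ lies in $H$-degree $n$. From this presentation the equivalences (1)$\iff$(2)$\iff$(3) are immediate: (2) says $\det\nolimits_V^{-1}\cong k$ with the grading already built in, (3) is the same after forgetting the $H$-action, and both amount to $(\star)$.

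Next I would prove (i). Because $V$ is small there are opens $U\subseteq\Spec S$ and $W\subseteq\Spec A$ with complements of codimension $\ge 2$ for which $\pi:U\to W$ is a $G$-torsor. On the torsor the relative dualizing sheaf is $G$-equivariantly $\O_U\otimes_k\lambda_{G,G}$---this is the geometric content of the Knop character, and is exactly where the hypothesis enters---so $\omega_U\cong\pi^*\omega_W\otimes_k\lambda_{G,G}$. Applying $\pi_*$, using the projection formula and $(\pi_*\O_U)^G=\O_W$, and then the triviality $\lambda_{G,G}\cong k$, I get $(\pi_*\omega_U)^G\cong\omega_W$; that is, $\omega_S^G$ and $\omega_A$ restrict to the same sheaf on $W$. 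Both are reflexive (divisorial) modules over the normal ring $A$---$\omega_A$ by general theory and $\omega_S^G$ as the invariants of the free, hence reflexive, module $\omega_S$---so each is the pushforward from $W$ of its restriction there. As $\codim(\Spec A\setminus W)\ge 2$, they agree, giving $\omega_A\cong\omega_S^G$ as $(H,A)$-modules, which is (i).

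With (i) available, the graded pieces of $\omega_A$ become isotypic data of $S$. From $\omega_S\cong S(-n)\otimes_k\det\nolimits_V^{-1}$ and the commuting $G$- and $H$-actions I obtain
\[
(\omega_A)_d\cong(\omega_S^G)_d\cong\bigl(S_{d-n}\otimes_k\det\nolimits_V^{-1}\bigr)^G\cong\Hom_G(\det\nolimits_V,S_{d-n}),
\]
the $\det_V$-isotypic component of $S_{d-n}$. Hence the initial degree of $\omega_A$ equals $n+\min\{e\ge 0:\Hom_G(\det\nolimits_V,S_e)\ne 0\}$, finite since $\omega_A\ne 0$. Reading $a(A)$ as the negative of this initial degree (graded local duality, valid without assuming Cohen--Macaulayness), this is $\le -n$ in general, which is (ii); and it equals $-n$ precisely when $\Hom_G(\det\nolimits_V,S_0)=\Hom_G(\det\nolimits_V,k)\ne 0$, i.e.\ when $\det_V\cong k$. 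This proves (7)$\iff(\star)$.

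Finally I would close the loop (4)--(6) using \cite[(11.22)]{Has4}: for a small action $\omega_A\cong A(a)$ holds iff $\det_V\cong\lambda_{G,G}$ and $a=-n$, which under $\lambda_{G,G}\cong k$ reads $\omega_A\cong A(-n)\iff(\star)$, giving (4)$\iff(\star)$; the same citation shows a quasi-Gorenstein $A$ must have shift $-n$, so (5)$\iff$(6)$\iff(\star)$. Combined with the chains above, (1)--(7) are all equivalent to $(\star)$. The main obstacle I expect is the rigorous justification of (i): identifying the relative dualizing sheaf of the torsor with $\lambda_{G,G}$ $G$-equivariantly and uniformly across group schemes that need be neither smooth nor finite (infinitesimal and reductive alike), and then safely extending the resulting isomorphism across the codimension-two locus via reflexivity of the two canonical modules. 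Once that descent is secured, the remaining implications are the formal bookkeeping sketched above.
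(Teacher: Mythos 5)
Your proposal is correct and takes essentially the same route as the paper: both arguments rest on the equivariant canonical-module correspondence for almost principal $G$-bundles from \cite[(11.22)]{Has4} (the paper's Lemma~\ref{Watanabe.lem} together with Lemma~\ref{pre-q-Gor.lem}), on the identification $\omega_S\cong S(-n)\otimes_k\det_V^{-1}$, and on the inclusion $\omega_A\cong\omega_S^G\subset\omega_S$ to bound the $a$-invariant and force $\det_V$ trivial when $a(A)=-n$. The differences are purely organizational: the paper outsources to \cite[(11.22)]{Has4} the torsor-descent and reflexive-extension step (i) that you sketch and rightly flag as the technical crux, arranges the equivalences as a cycle of implications rather than through your hub $(\star)$, and spells out the short graded argument ($\omega_A$ graded, finitely generated, rank one, projective over the positively graded $A$, hence $\cong A(a)$) that you compress into ``a quasi-Gorenstein $A$ must have shift $-n$.''
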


For the case that $G$ is finite and constant, the theorem was proved (in a stronger form) by Goel, Jeffries, and Singh \cite{GJS}.
Note that they do not require that the action of $G$ on $V$ is small.
They proved that $a(A)\leq a(S)=-n$ in general.
They also proved that the equality $a(A)=-n$ holds if and only if the image of
$G\rightarrow\GL(V)$ is a subgroup of $\SL(V)$ without pseudo-reflections 
for the case that $G$ is finite and constant \cite[Proposition~4.1, Theorem~4.4]{GJS}.
It is interesting to ask if these are true for any finite group scheme $G$.
Note also that the equivalence (1)$\Leftrightarrow$(5) for the case that
$G$ is finite linearly reductive (but not necessarily
constant) was proved recently by Liedtke and Yasuda \cite{LY}.
It also follows from \cite[(7.61),(11.22){\bf 2}]{Has4}.

\medskip

Acknowledgment.
The author thanks K.~Goel, A.~Singh, K.-i.~Watanabe, and T.~Yasuda for valuable communications.
He is also grateful to A.~Masuoka for valuable advice.

\section{Preliminaries}

\paragraph
Let $k$ be a field, $f:\tilde G\rightarrow H$ be a homomorphism between
affine $k$-group schemes of finite type with $G=\Ker f$.
Let $\Cal F(\tilde G)$ be the category of $\tilde G$-schemes separated of finite type over $k$.
For $(h_Z:Z\rightarrow \Spec k)\in\Cal F(\tilde G)$, {\em the} $\tilde G$-dualizing complex of $Z$
(or better, of $h_Z$) is $h_Z^!(k)$ by definition, and we denote it by $\II_Z=\II_Z(G)$,
where $(-)^!$ denotes the twisted inverse \cite{Has2}.
The $\tilde G$-canonical module $\omega_Z$ is the lowest nonzero cohomology group of $\II_Z$.
It is a coherent $\tilde G$-module.
If $Z=\Spec B$ is affine, $H^0(Z,\omega_Z)$ is denoted by $\omega_B$, and is called the
$\tilde G$-equivariant canonical module of $B$.
When we forget the $\tilde G$-structure, $\II_Z$ is the dualizing complex of the scheme $Z$
without the $\tilde G$-action \cite[(31.17)]{Has2}.

\paragraph
A morphism $\varphi:X\rightarrow Y$ of $\tilde G$-schemes of finite type over $k$ is called a $\tilde G$-enriched principal $G$-bundle
if $G$ acts trivially on $Y$, $\varphi$ is faithfully flat, and the morphism $\Phi:G\times X\rightarrow X\times_YX$
given by $\Phi(g,x)=(gx,x)$ is an isomorphism.
As $G$ is affine, flat, and Gorenstein over $\Spec k$, $\varphi$ is affine, flat, and Gorenstein.

\paragraph
Let $X$ be a scheme and $U$ its open subset.
We say that $U$ is $n$-large if $\codim(X\setminus U,X)\geq n+1$,
where we regard that the codimension of the empty set in $X$ is $\infty\geq n+1$.

\begin{definition}[{cf.~\cite[(10.2)]{Has4}}]
A diagram of $\tilde G$-schemes of finite type
\begin{equation}\label{almost-intro.eq}
\xymatrix{
X & U \ar@{_{(}->}[l]_i \ar[r]^\rho & V \ar@{^{(}->}[r]^j & Y
}
\end{equation}
is called a $\tilde G$-enriched $n$-almost rational principal $G$-bundle if
(1) $G$ acts trivially on $Y$;
(2) $j$ is an open immersion, and $j(V)$ is $n$-large in $Y$;
(3) $i$ is an open immersion, and $i(U)$ is $n$-large in $X$;
(4) $\rho:U\rightarrow V$ is a principal $G$-bundle.
That is, $\rho$ is faithfully flat, and $\Phi:G\times U\rightarrow U\times_VU$
given by $\Phi(n,u)=(nu,u)$ is an isomorphism.
\end{definition}

\paragraph
In what follows, $1$-large and $1$-almost will simply be called large and almost,
respectively.
A $\tilde G$-morphism $\varphi:X\rightarrow Y$ is said to be a $\tilde G$-enriched $n$-almost
principal $G$-bundle with respect to $U$ and $V$, if $U$ is a $\tilde G$-stable open subset of $X$,
$V$ is an $H$-stable open subset of $Y$, and the diagram (\ref{almost-intro.eq}) is a
$\tilde G$-enriched $n$-almost rational principal $G$-bundle, where $\rho:U\rightarrow V$ is the
restriction of $\varphi$.
We say that
a $\tilde G$-morphism $\varphi:X\rightarrow Y$ is a $\tilde G$-enriched $n$-almost
principal $G$-bundle if it is so with respect to $U$ and $V$ for some $U$ and $V$.

\begin{lemma}\label{Ref-equiv.lem}
  Let $\varphi:X\rightarrow Y$ be a $\tilde G$-enriched almost principal $G$-bundle between
  $\tilde G$-schemes of finite over $k$.
  Assume that $X$ is normal, and that $\O_Y\rightarrow(\varphi_*\O_X)^G$ is an isomorphism.
  Let $\Refl(\tilde G,X)$ be the category of 
  coherent $(\tilde G,\O_X)$-modules which are reflexive as $\O_X$-modules,
  and
  Let $\Refl(H,Y)$ be the category of 
  coherent $(H,\O_Y)$-modules which are reflexive as $\O_Y$-modules.
  Then we have:
  The functor $\Cal G:\Refl(\tilde G,X)\rightarrow\Refl(H,Y)$ given by $\Cal G(\Cal M)=(\varphi_*\Cal M)^G$
    is an equivalence, and $\Cal F:\Refl(H,Y)\rightarrow\Refl(\tilde G,X)$ given by $\Cal F(\Cal N)=(\varphi^*\Cal N)^{**}$
    is its quasi-inverse, where $(-)^{**}$ is the double dual.
\end{lemma}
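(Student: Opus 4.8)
The plan is to factor the asserted equivalence through the locus on which $\varphi$ is a genuine principal bundle, where ordinary torsor descent applies, and then to use the normality of $X$ together with the codimension hypotheses to pass back and forth between the schemes $X,Y$ and that locus. Fix $\tilde G$-stable open $U\subseteq X$ and $H$-stable open $V\subseteq Y$ as in the definition, so that $i:U\hookrightarrow X$ and $j:V\hookrightarrow Y$ are large open immersions and $\rho=\varphi|_U:U\to V$ is an honest $\tilde G$-enriched principal $G$-bundle; after shrinking we may arrange $U=\varphi^{-1}(V)$. First I would record that $Y$ is normal: since $\O_Y\cong(\varphi_*\O_X)^G$ and $X$ is normal, $\O_Y$ is locally a ring of $G$-invariants of a normal ring, hence normal.

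\emph{Step 1 (restriction to the bundle locus).} On a normal scheme a coherent sheaf is reflexive exactly when it is $S_2$, and such a sheaf is recovered from its restriction to any large open by pushing forward. Thus $\M\mapsto\M|_U$ is an equivalence $\Refl(\tilde G,X)\to\Refl(\tilde G,U)$ with quasi-inverse $\M_0\mapsto i_*\M_0$, using that the pushforward of a reflexive sheaf along a large open immersion into a normal scheme is again reflexive; these functors are manifestly $\tilde G$-equivariant. Applying the same to $j$ and the normal scheme $Y$ gives an equivalence $\Refl(H,Y)\to\Refl(H,V)$ with quasi-inverse $j_*$.

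\emph{Step 2 (descent along the genuine bundle).} Because $\Phi:G\times U\to U\times_V U$ is an isomorphism and $\rho$ is faithfully flat, torsor descent identifies $(\tilde G,\O_U)$-modules with $(H,\O_V)$-modules via $\M_0\mapsto(\rho_*\M_0)^G$ and $\N_0\mapsto\rho^*\N_0$, the residual $\tilde G/G\cong H$-action furnishing the $H$-structure downstairs. Since $\rho$ is flat (indeed Gorenstein and faithfully flat), flat pullback commutes with dualization, so $\rho^*$ preserves reflexivity; and $S_2$-ness descends along the faithfully flat $\rho$, so $(\rho_*\M_0)^G$ is reflexive whenever $\M_0$ is, via the descent isomorphism $\rho^*(\rho_*\M_0)^G\cong\M_0$. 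Hence this pair restricts to mutually quasi-inverse equivalences between $\Refl(\tilde G,U)$ and $\Refl(H,V)$.

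\emph{Step 3 (identification of the global functors).} Composing the equivalences of Steps 1 and 2 gives an equivalence $\Refl(\tilde G,X)\to\Refl(H,Y)$, and it remains to match it with $\G$ and $\F$ as defined on all of $X,Y$. For $\G$, base change along the open immersion $j$ (using $U=\varphi^{-1}(V)$) yields $((\varphi_*\M)^G)|_V\cong(\rho_*(\M|_U))^G$; the right side is reflexive and $V$ is large, whence $(\varphi_*\M)^G\cong j_*(\rho_*(\M|_U))^G$, which is the composite and is in particular reflexive, so $\G$ is well defined with values in $\Refl(H,Y)$. For $\F$, one has $(\varphi^*\N)|_U\cong\rho^*(\N|_V)$, already reflexive, so the reflexive hull $(\varphi^*\N)^{**}$ agrees with $i_*$ of that restriction, which is the composite in the other direction; thus $\F$ is well defined and quasi-inverse to $\G$. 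The double dual in $\F$ is exactly what is needed, since $\varphi^*\N$ need not be reflexive on all of $X$, only on the bundle locus $U$.

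The main obstacle I anticipate is Step 3: verifying that the globally defined operations $(\varphi_*-)^G$ and $(\varphi^*-)^{**}$ genuinely coincide with the clean restrict--descend--extend composite. This amounts to commuting the three operations $\varphi_*$, formation of $G$-invariants, and extension $i_*,j_*$ across the large opens, which rests on flat base change along $j$, the codimension-two hypotheses, and the fact that taking $G$-invariants of a reflexive (hence $S_2$) sheaf on $X$ yields an $S_2$ sheaf downstairs after faithfully flat descent.
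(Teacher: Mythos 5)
Your overall strategy (restrict to the bundle locus, descend along the honest torsor, extend back across the large opens) is sound, and it is essentially the content of the results the paper invokes --- the paper's own ``proof'' is nothing but a citation of \cite[(10.13),(11.3)]{Has4}. But there is a genuine gap at the crux of your Step 3, in the treatment of $\G$. You argue: $((\varphi_*\M)^G)|_V\cong(\rho_*(\M|_U))^G$ is reflexive and $V$ is large, ``whence'' $(\varphi_*\M)^G\cong j_*\bigl((\rho_*(\M|_U))^G\bigr)$. That inference is false in general: a coherent sheaf on $Y$ whose restriction to a large open is reflexive need not equal the pushforward of that restriction. For instance, on $Y=\Bbb A^2$ with $V=Y\setminus\{0\}$, the ideal sheaf $\N$ of the origin satisfies $\N|_V\cong\O_V$ (reflexive), yet $j_*(\N|_V)=\O_Y\neq\N$. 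What is needed is a depth/extension property of $(\varphi_*\M)^G$ itself along $Y\setminus V$, and that is exactly what is being proved, so as written the step is circular. The correct route uses Step 1 \emph{upstairs} rather than downstairs: since $X$ is normal, $\M$ reflexive, and $U$ large, one has $\M\cong i_*(\M|_U)$; applying $\varphi_*$ and using $\varphi\circ i=j\circ\rho$ gives $\varphi_*\M\cong j_*\rho_*(\M|_U)$; finally $(-)^G$ is a kernel (the equalizer of the coaction and the trivial coaction), and $j_*$ is left exact and commutes with $-\otimes_kk[G]$, so $(\varphi_*\M)^G\cong j_*\bigl((\rho_*(\M|_U))^G\bigr)$. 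This single computation simultaneously shows that $\G$ takes values in $\Refl(H,Y)$ (by your Steps 1--2, the right-hand side is the extension of a reflexive sheaf on $V$ over the normal $Y$) and identifies $\G$ with the composite equivalence.

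A second, smaller defect: the reduction ``after shrinking we may arrange $U=\varphi^{-1}(V)$'' is unjustified. The definition only gives $U\subseteq\varphi^{-1}(V)$, nothing guarantees that $\varphi$ restricted to all of $\varphi^{-1}(V)$ is still a torsor, and shrinking $V$ does not repair this. Fortunately it is also unnecessary: writing $U'=\varphi^{-1}(V)$ and $i'':U\hookrightarrow U'$, reflexivity of $\M$ gives $\M|_{U'}\cong i''_*(\M|_U)$, hence $(\varphi_*\M)|_V\cong(\varphi|_{U'})_*(\M|_{U'})\cong\rho_*(\M|_U)$ anyway; and the corrected argument above never uses $U=\varphi^{-1}(V)$. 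With these two repairs --- both using only ingredients you already set up --- your proof goes through; in particular your $\F$-direction is fine, since $(\varphi^*\N)^{**}$ and $i_*\rho^*(\N|_V)$ are both reflexive on the normal $X$ and agree on the large open $U$.
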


\begin{proof}
  Follows immediately from \cite[(10.13),(11.3)]{Has4}.
  A short and self-contained proof for the case that everything is affine can be found in \cite[(2.4)]{HK}.
\end{proof}
    
\paragraph
Let $X$ be a $\tilde G$-scheme, and $L$ be a $\tilde G$-module.
Let $h_X:X\rightarrow\Spec k$ be the structure map.
Then for a quasi-coherent $(\tilde G,\O_X)$-module $\Cal M$, we denote
$\Cal M\otimes_{\O_X}h_X^*L$ by $\Cal M\otimes_k L$.
Note that $G$ is a normal closed subgroup scheme of $\tilde G$.
So $\tilde G$ acts on $G$ by the adjoint action.
We denote this scheme by $G\ad$.
Let $e:\Spec k\rightarrow G\ad$ be the unit element.
It is a $\tilde G$-stable closed immersion.
We denote the $\tilde G$-module $H^0(\Spec k,e^*(\omega_{G\ad}))$ by $\lambda_{\tilde G,G}$,
and we call it the {\em Knop character} of $G$ (enriched by $\tilde G$).
  If $G$ is $k$-smooth, then $\lambda_{G,G}\cong\det_{\frak g}^*$
  by \cite[(28.11)]{Has2}, where $\frak g=\Lie G$ is the adjoint
  representation of $G$, and $\det$ denotes the top exterior power.
  Its dual $\det_{\frak g}=\lambda_{G,G}^*$ is denoted by $\lambda\ad$ in \cite{Knop}, and played an important role in
  studying Gorenstein property of invariant subrings \cite[Satz~2]{Knop}.

\begin{lemma}\label{Watanabe.lem}
  Let $\varphi:X=\Spec T\rightarrow Y=\Spec B$ be a $\tilde G$-enriched almost principal
  $G$-bundle which is also a morphism in $\Cal F(\tilde G)$ with $X$ and $Y$ affine normal.
  Then $\omega_T=(T\otimes_B\omega_B)^{**}\otimes_k\lambda_{\tilde G,G}$,
  where $(-)^{**}=\Hom_T(\Hom_T(-,T),T)$
  is the double dual.
  We also have that $\omega_B=(\omega_T\otimes_k\lambda_{\tilde G,G}^*)^G$.
  In particular, if moreover,  $\lambda_{\tilde G,G}\cong k$, then
  $\omega_T\cong (T\otimes_B\omega_B)^{**}$ and $\omega_B\cong \omega_T^G$.
\end{lemma}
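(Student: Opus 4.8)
The plan is to reduce everything to the locus where $\varphi$ is a genuine principal $G$-bundle, carry out a relative-duality computation there, and then re-extend over codimension two by reflexivity. To begin, observe that both sides of the first formula are reflexive $(\tilde G,\Cal O_X)$-modules: the canonical module $\omega_T$ of the normal ring $T$ is reflexive, the double dual $(T\otimes_B\omega_B)^{**}$ is reflexive by construction, and tensoring with the one-dimensional $\tilde G$-module $\lambda_{\tilde G,G}$ preserves reflexivity. Since a reflexive sheaf on a normal scheme coincides with the pushforward of its restriction to any large open subset, it suffices to produce a $\tilde G$-equivariant isomorphism after restricting to the large open $U\subseteq X$ on which $\varphi$ restricts to a $\tilde G$-enriched principal $G$-bundle $\rho\colon U\to V$. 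As forming the canonical module commutes with restriction to opens, $\omega_T|_U=\omega_U$ and $\omega_B|_V=\omega_V$; and since $\rho$ is flat with $U$ normal, the double dual is redundant on $U$, so $(T\otimes_B\omega_B)^{**}|_U\cong\rho^*\omega_V$.

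On the principal-bundle locus I would invoke the behaviour of the twisted inverse image along the Gorenstein morphism $\rho$. Because $\rho$ is affine, flat and Gorenstein, $\rho^!\II_V\cong\rho^*\II_V\otimes_{\Cal O_U}\omega_{U/V}[d]$ for an invertible relative dualizing sheaf $\omega_{U/V}=\rho^!\Cal O_V$ (in relative dimension $d$), and $\II_U\cong\rho^!\II_V$ as $(\tilde G,\Cal O_U)$-complexes. Passing to the lowest nonzero cohomology and using flatness of $\rho$ to pull back $\omega_V$ yields the equivariant isomorphism
\[
\omega_U\;\cong\;\rho^*\omega_V\otimes_{\Cal O_U}\omega_{U/V}.
\]
It thus remains to identify the $\tilde G$-equivariant line bundle $\omega_{U/V}$ with $\Cal O_U\otimes_k\lambda_{\tilde G,G}$.

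This identification is the heart of the argument. Here I would use the torsor structure $\Phi\colon G\times U\xrightarrow{\ \sim\ }U\times_V U$, $\Phi(g,u)=(gu,u)$, under which the second projection corresponds to $\mathrm{pr}_2\colon G\times U\to U$ and the first projection to the action map $a\colon G\times U\to U$. Base change of $\rho$ along $\rho$ identifies the pullback $a^*\omega_{U/V}$ with the relative dualizing sheaf of $\mathrm{pr}_2$, which is $\mathrm{pr}_G^*\,\omega_{G\ad}$ obtained by base change from $h_G^!(k)$. Restricting along the unit section $U\to G\times U$, $u\mapsto(e,u)$, collapses $a$ to the identity and $\mathrm{pr}_G$ to the unit $e$, whence $\omega_{U/V}\cong\Cal O_U\otimes_k e^*\omega_{G\ad}=\Cal O_U\otimes_k\lambda_{\tilde G,G}$ as $(\tilde G,\Cal O_U)$-modules; the appearance of the adjoint action is forced precisely because the equivariant structure is transported through $\Phi$ and the conjugation action of $\tilde G$ on $G$. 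I expect this equivariant descent-and-unit-section step to be the main obstacle, since it is where the precise $\tilde G$-structure of the relative dualizing sheaf must be matched with the definition of the Knop character. Combining with the display and re-extending by reflexivity gives $\omega_T=(T\otimes_B\omega_B)^{**}\otimes_k\lambda_{\tilde G,G}$.

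The remaining assertions follow formally. Tensoring the first formula with $\lambda_{\tilde G,G}^*$ gives $\omega_T\otimes_k\lambda_{\tilde G,G}^*\cong(T\otimes_B\omega_B)^{**}=\Cal F(\omega_B)$ in the notation of Lemma \ref{Ref-equiv.lem}; applying the quasi-inverse functor $\Cal G(\Cal M)=(\varphi_*\Cal M)^G$ yields $(\omega_T\otimes_k\lambda_{\tilde G,G}^*)^G\cong\Cal G(\Cal F(\omega_B))=\omega_B$. Specializing to the case $\lambda_{\tilde G,G}\cong k$ gives $\omega_T\cong(T\otimes_B\omega_B)^{**}$ and $\omega_B\cong\omega_T^G$ at once.
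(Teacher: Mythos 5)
Your proof is correct, but it takes a genuinely different route from the paper: the paper's entire proof of Lemma~\ref{Watanabe.lem} is a one-line citation, declaring it a special case of \cite[(11.22)]{Has4}, whereas you reprove that result from scratch in the affine setting. Your route (restrict to the torsor locus $\rho\colon U\to V$, use equivariant duality for the affine flat Gorenstein morphism $\rho$ to get $\omega_U\cong\rho^*\omega_V\otimes\omega_{U/V}$, identify $\omega_{U/V}\cong\Cal O_U\otimes_k\lambda_{\tilde G,G}$ by transporting the relative dualizing sheaf through $\Phi$ and restricting along the unit section, then extend over the codimension-two complement by reflexivity) is exactly the mechanism one expects behind the cited result, and your unit-section step correctly explains why the adjoint action, hence the Knop character, enters: the $\tilde G$-action induced on $G\times U$ through $\Phi$ is $\tilde g\cdot(g,u)=(\tilde g g\tilde g^{-1},\tilde g u)$, so the second projection is equivariantly the base change of $G\ad\to\Spec k$. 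Two points to tighten. First, when you invoke Lemma~\ref{Ref-equiv.lem} for the formula $\omega_B\cong(\omega_T\otimes_k\lambda_{\tilde G,G}^*)^G$, that lemma requires $\Cal O_Y\to(\varphi_*\Cal O_X)^G$ to be an isomorphism, which is not among the stated hypotheses of Lemma~\ref{Watanabe.lem}; it does hold here --- $B\to\Gamma(V,\Cal O_V)$ is an isomorphism by normality of $Y$ and largeness of $V$, $\Gamma(U,\Cal O_U)^G\cong\Gamma(V,\Cal O_V)$ by fppf descent along the torsor $\rho$, and $T^G=T\cap\Gamma(U,\Cal O_U)^G$ inside $\Gamma(U,\Cal O_U)$ --- but this verification should be recorded. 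Second, the equivariant composition and flat base-change isomorphisms for the twisted inverse that you use on the torsor locus are the machinery of \cite{Has2}, which is the same machinery underlying \cite[(11.22)]{Has4}; so in effect you are reconstructing the proof of the cited result rather than finding a shortcut. What the paper's citation buys is brevity and generality (the result in \cite{Has4} is not restricted to the affine case); what your argument buys is a self-contained, transparent account of exactly where $\lambda_{\tilde G,G}$ comes from.
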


\begin{proof}
  This is a special case of \cite[(11.22)]{Has4}.
\end{proof}

\paragraph
Let $\Lambda$ be a finite-dimensional $k$-algebra.
We say that $\Lambda$ is {\em Frobenius} if ${}_\Lambda\Lambda\cong
D(\Lambda_\Lambda)$ as left $\Lambda$-modules, where $D=\Hom_k(-,k)$.
This is equivalent to say that there is a
nondegenerate bilinear form $\beta
:\Lambda\times \Lambda\rightarrow k$
such that $\beta(ac,b)=\beta(a,cb)$.
If, moreover, we can take such a $\beta$ to be symmetric, we say that
$\Lambda$ is {\em symmetric}.
This is equivalent to say that the bimodule ${}_\Lambda \Lambda_\Lambda$ is
isomorphic to $D({}_\Lambda \Lambda_\Lambda)$.

\paragraph
Let $\Gamma$ be a finite dimensional $k$-Hopf algebra.
We define
\[
\textstyle\int^l_\Gamma:=\{x\in \Gamma^*\mid\forall y\in \Gamma^*\;yx=\epsilon(y)x\},
\]
where $\epsilon:\Gamma^*\rightarrow k$ is given by $\epsilon(y)=y(1_\Gamma)$.
An element of $\int^l_\Gamma$ is called a {\em left integral} on $\Gamma$ (or {\em in} $\Gamma^*$,
according to the terminology in \cite{Montgomery}).

\paragraph
Note that
\[
\textstyle\int^l_\Gamma=(\Gamma^*)^\Gamma=\{\psi\in\Gamma^*\mid\omega_{\gamma^*}(\psi)=\psi\otimes 1\}
=\Hom_\Gamma(\Gamma,k).
\]
Indeed, if $\gamma_1,\ldots,\gamma_n$ is a $k$-basis of $\Gamma$ and $\gamma^*_1,\ldots,\gamma^*_n$ is
its dual basis, then the comodule structure $\omega_{\Gamma^*}$ of $\Gamma^*$ is
given by $\omega_{\Gamma^*}(\alpha)=\sum_{i=1}^n\gamma^*_i\alpha\otimes\gamma_i$ for $\alpha\in\Gamma^*$.
In other words, $\omega(\alpha)=\sum_{(\alpha)}\alpha_{(0)}\otimes\alpha_{(1)}$ is given by
$\sum_{(\alpha)}\langle \beta,\alpha_{(1)}\rangle\alpha_{(0)}=\beta\alpha$ for $\beta\in\Gamma^*$.
So $\omega(\psi)=\psi\otimes 1$ is equivalent to say that $\rho\psi=\epsilon(\rho)\psi$ for $\rho
\in\Gamma^*$, as desired.

\paragraph
We also define $\int^r_\Gamma=\{x\in \Gamma^*\mid \forall y\in \Gamma^*\; xy=\varepsilon(y)x\}$,
and an element of $\int^r_\Gamma$ is called a {\em right integral} on $\Gamma$.
It is known that $\dim_k \int^l_\Gamma=\dim_k \int^r_\Gamma =1$ \cite[Corollary~5.1.6]{Sweedler}.
If $\int^l_\Gamma=\int^r_\Gamma$, then we say that $\Gamma^*$ is unimodular.
Radford proved \cite{Radford} that $\Gamma^*$ is a symmetric algebra if and only if $\Gamma^*$ is unimodular
and $\Cal S^2$ is an inner automorphism of $\Gamma^*$, where $\Cal S$ is the antipode of $\Gamma$.
Suzuki \cite{Suzuki} constructed an example of a finite dimensional unimodular $k$-Hopf algebra which is
not symmetric.

\begin{lemma}\label{Radford.lem}
  Let $G$ be a finite $k$-group scheme, $H$ an affine $k$-group scheme of finite type,
  and $\tilde G=G\times H$ is the direct product.
  Let $\Gamma=k[G]$ be the coordinate ring of $G$.
  Then the following are equivalent.
  \begin{enumerate}
  \item[\rm(1)] $\Gamma^*$ is symmetric.
  \item[\rm(2)] $\Gamma^*$ is unimodular.
  \item[\rm(3)] $\lambda_{\tilde G,G}\cong k$ as $\tilde G$-modules.
  \item[\rm(4)] $\lambda_{G,G}\cong k$ as $G$-modules.
  \end{enumerate}
\end{lemma}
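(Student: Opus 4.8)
The plan is to break the four-way equivalence into three linkages: the essentially formal pair (1)$\Leftrightarrow$(2), the bookkeeping pair (3)$\Leftrightarrow$(4), and the substantive (2)$\Leftrightarrow$(4).

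For (1)$\Leftrightarrow$(2) I would simply apply Radford's criterion quoted above: $\Gamma^*$ is symmetric if and only if it is unimodular and $\Cal S^2$ is inner. Since $\Gamma=k[G]$ is commutative, its dual $\Gamma^*$ is cocommutative, and the antipode of a commutative (hence also of a cocommutative) Hopf algebra is an involution; so $\Cal S^2=\id$ on $\Gamma^*$, which is trivially inner. Radford's extra hypothesis is therefore automatic and symmetry collapses to unimodularity. For (3)$\Leftrightarrow$(4) I would unwind the adjoint action of $\tilde G=G\times H$ on its normal subgroup $G$: in a direct product the factor $H$ centralizes $G$, so conjugation by $H$ is trivial, whence $H$ acts trivially on $G\ad$ and on the $\tilde G$-fixed unit $e$. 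Thus the $H$-component of $\lambda_{\tilde G,G}=H^0(\Spec k,e^*\omega_{G\ad})$ is trivial, and $\lambda_{\tilde G,G}\cong k$ as a $\tilde G$-module if and only if $\lambda_{G,G}$ is trivial as a $G$-module.

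The heart is (2)$\Leftrightarrow$(4). First I would record that $G=\Spec\Gamma$ is finite Gorenstein over $k$, with canonical module $\omega_G\cong D(\Gamma)=\Gamma^*$ as an $\O_G$-module; this is an invertible module whose socle is one-dimensional and is spanned by an integral. Taking the fiber at the unit, $e^*\omega_{G\ad}=\omega_G\otimes_\Gamma k_\epsilon$ is one-dimensional, and the task is to carry along the conjugation-equivariant structure so as to read off the residual $G$-action, which by definition is $\lambda_{G,G}$. I would compute that action through the usual decomposition of conjugation into left and right translation: a left integral $t\in\Gamma^*$ satisfies $at=\epsilon(a)t$, so the adjoint action sends $t\mapsto tg^{-1}$, i.e.\ acts purely by right translation, which by Radford's description scales $t$ by the distinguished grouplike $\chi_{\mathrm{mod}}$ (a character of $G$). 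Hence $\lambda_{G,G}\cong\chi_{\mathrm{mod}}^{\pm1}$. Since $\Gamma^*$ is unimodular precisely when the left integral is also a right integral, i.e.\ when $\chi_{\mathrm{mod}}$ is trivial, this yields $\lambda_{G,G}\cong k$ if and only if $\Gamma^*$ is unimodular.

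The main obstacle is the middle step of (2)$\Leftrightarrow$(4): matching the $G$-equivariant structure on $\omega_{G\ad}$ produced by the twisted-inverse-image formalism of \cite{Has2} with the elementary Hopf-algebraic action on the integral line, and in particular checking that the residual action at $e$ is exactly the modular character and not some twist of it. I expect this to follow from the explicit description of equivariant dualizing complexes together with the identification $\int^l_\Gamma=(\Gamma^*)^\Gamma$ recorded above, which bridges the socle of $\omega_G$ with the integral of $\Gamma^*$; but the alignment of the left/right conventions (and the passage between integrals in $\Gamma$ and in $\Gamma^*$ under $k$-duality) is where the real care is needed.
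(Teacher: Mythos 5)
Your route coincides with the paper's in all essentials: (1)$\Leftrightarrow$(2) via Humphreys/Radford plus cocommutativity of $\Gamma^*$ (so $\Cal S^2=\id$ and Radford's extra hypothesis is vacuous) is exactly the paper's argument; and your Hopf-algebraic core for (2)$\Leftrightarrow$(4) --- on a left integral the left-translation part of conjugation is killed by $at=\epsilon(a)t$, so the adjoint action reduces to right translation, which is measured by the distinguished grouplike (modular) character and is trivial precisely in the unimodular case --- is precisely the computation that drives the paper's proof of (2)$\Rightarrow$(3) and (4)$\Rightarrow$(2). Your handling of (3)$\Leftrightarrow$(4) via triviality of the $H$-action on $G\ad$ also matches the paper, which uses the same observation to promote a $(G,k[G\ad])$-isomorphism to a $(\tilde G,k[G\ad])$-isomorphism.

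However, the step you defer (``the main obstacle'') is not a routine alignment of conventions; it is the actual content of the paper's proof, and it needs a mechanism you never name. Two concrete points. First, $\lambda_{G,G}=H^0(\Spec k, e^*\omega_{G\ad})$ is a \emph{fiber}, i.e.\ a quotient $\omega_{G\ad}\otimes_\Gamma k_\epsilon$ of $\Gamma^*$, whereas the integral line $\int^l_\Gamma$ is a \emph{subspace} of $\Gamma^*$; to transport the conjugation action from the subspace to the fiber one needs a $G$-equivariant trivialization of $\omega_{G\ad}$ relative to $\int^l_\Gamma$. The paper produces it in two moves: equivariant duality for the finite morphism $G\rightarrow\Spec k$ (\cite[(27.8)]{Has2}) gives $\omega_{G\ad}\cong k[G\ad]^*$ as $(G,k[G\ad])$-modules, and then the map $\zeta(\gamma)=\sum_{(\gamma)}\gamma_{(0)}(\Cal S\gamma_{(1)})\otimes\gamma_{(2)}$ from the proof of Sweedler's fundamental theorem of Hopf modules is checked to be a $(G,k[G\ad])$-homomorphism, yielding $k[G\ad]^*\cong\lambda'\otimes k[G\ad]$ with $\lambda'=\int^l_\Gamma$ carrying the conjugation action; pulling back along $e$ then literally computes $\lambda_{\tilde G,G}\cong\lambda'$. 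Without some such argument your identification $\lambda_{G,G}\cong\chi_{\mathrm{mod}}^{\pm 1}$ remains an expectation, not a proof. Second, your computation quantifies over group elements $g$; the interesting case here is infinitesimal $G$ (e.g.\ Frobenius kernels --- étale and constant $G$ are covered by other parts of the paper), where $G(k)$ is trivial, so the argument must be made functorially over all base rings, i.e.\ in comodule terms --- which is exactly what the equivariance of $\zeta$ encodes. So: right ideas, same strategy as the paper, but the bridge you flag is a genuine missing piece, and Sweedler's Hopf-module theorem (applied equivariantly) is the tool that fills it.
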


\begin{proof}
  As $\Gamma^*$ is cocommutative, $s^2=\id_{\Gamma^*}$, where $s$ is the antipode of $\Gamma^*$.
  By \cite[Theorem~1,2]{Humphreys} (see also \cite{Radford}), (1)$\Leftrightarrow$(2) holds.

  We prove (2)$\Rightarrow$(3).
  Note that $\Gamma^*$ is a $(G,k[G])$-module.
  In other words, $\Gamma^*$ is a $\Gamma$-Hopf module.
  Let $\zeta:\Gamma^*\rightarrow \Gamma^*\otimes_k \Gamma$ be the map given by
  $\zeta(\gamma)=\sum_{(\gamma)}\gamma_{(0)}(\Cal S \gamma_{(1)})\otimes\gamma_{(2)}$,
  where $\Cal S$ is the antipode of $\Gamma$.
  By the proof of \cite[Theorem~4.1.1]{Sweedler}, $\zeta$ is injective and $\Image\zeta
  =\int^l_\Gamma\otimes k[G]$.
  Now let us consider the same map $\zeta:k[G\ad]^*\rightarrow k[G\ad]^*\otimes k[G\ad]$.
  It is easy to see that this is a $(G,k[G\ad])$-homomorphism.
  Note also that $\int^l_\Gamma$ is a $G$-submodule of both $k[G\ad]$ and $k[G_r]$,
  where $G_r$ is the right regular action.
  As $\int^l_\Gamma=\int^r_\Gamma$, we have that $\int^l_\Gamma$ as the submodule of $k[G\ad]$ is
  also $G$-trivial (isomorphic to $k$).
  Hence $\zeta$ induces an isomorphism $k[G\ad]^*\cong k[G\ad]$ of $(G,k[G\ad])$-modules.
  As $H$ acts trivially on $G\ad$, the isomorphism is that of $(\tilde G,k[G\ad])$-modules.
  Pulling back this isomorphism by the unit element $e:\Spec k\rightarrow G\ad$, we get
  $\lambda_{\tilde G,G}\cong k$, as we have $k[G\ad]^*\cong \omega_{k[G\ad]}$ by the duality of finite morphisms,
  see \cite[(27.8)]{Has2}.

  (3)$\Rightarrow$(4) is trivial.

  (4)$\Rightarrow$(2).
  The argument above shows that $k[G\ad]^*\cong \lambda'\otimes k[G\ad]$,
  where $\lambda'$ is $\int^l_\Gamma$ as a $G$-submodule of $k[G\ad]^*$.
  As $\int^l_\Gamma$ is trivial as a $G$-submodule of $k[G_l]$, where $G_l$ is $G$ with the left regular action,
  we have that
  $\lambda'$ agrees with $\int^l_\Gamma$ as a $G$-submodule of $k[G_r]$.
  The assumption (4) means $k\cong \lambda\cong \lambda'$.
  So $\int^l_\Gamma\subset \int^r_\Gamma$.
  As we know that both $\int^l_\Gamma$ and $\int^r_\Gamma$ are one-dimensional, we
  have that $\Gamma^*$ is unimodular.
\end{proof}

\section{Main theorem}

\paragraph
Let 
$S$ be a $k$-algebra of finite type on which $G$ acts.
Let $A=S^G$ be the ring of invariants.
If the canonical map $\Spec S\rightarrow \Spec A$ is an
almost principal $G$-bundle, then we say that the $G$-action on $S$ is small.
If $V$ is a $G$-module and $S=k[V]=\Sym V^*$ is small, then we say that
the representation $V$ of $G$ is small.
If $G$ is a finite (constant) group, then $V$ is small if and only if the action is faithful,
and $G\subset \GL(V)$ does not have a pseudo-reflection.
Letting each element of $V^*$ of degree one, $S=\Sym V^*$ is a graded $G$-algebra.
So letting $H=\Bbb G_m$ and $\tilde G=G\times H$, we have that $S$ is a $\tilde G$-algebra.

\begin{lemma}[{cf.~\cite[Remark~11.21]{Has4}}]
  In the following cases, we have that the Knop character $\lambda_{\tilde G,G}$ is trivial as $\tilde G$-modules.
  \begin{enumerate}
  \item[\rm(1)] $G$ is finite, and $k[G]^*$ is a symmetric algebra;
  \item[\rm(2)] $G$ is finite and \'etale;
  \item[\rm(3)] $G$ is finite and constant;
  \item[\rm(4)] $G$ is smooth and connected reductive;
  \item[\rm(5)] $G$ is abelian;
  \item[\rm(6)] $G$ is finite, and the identity component $G^\circ$ of $G$ is linearly reductive;
  \item[\rm(7)] $G$ is finite and linearly reductive.
  \end{enumerate}
\end{lemma}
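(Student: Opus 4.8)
The plan is to reduce every item to the triviality of the single-factor character $\lambda_{G,G}$ and then to separate the finite cases (handled through Lemma~\ref{Radford.lem}) from the smooth cases (handled through $\det_{\mathfrak g}$). First I would dispose of the factor $H=\mathbb{G}_m$: since $\tilde G=G\times H$ is a direct product, $H$ commutes with $G$ and therefore acts trivially on $G\ad$ by conjugation; by functoriality of the equivariant twisted inverse the induced $H$-action on $\omega_{G\ad}$, and hence on $\lambda_{\tilde G,G}=H^0(\Spec k,e^*\omega_{G\ad})$, is trivial, while the $G$-action is exactly $\lambda_{G,G}$. Thus $\lambda_{\tilde G,G}\cong k$ as $\tilde G$-modules iff $\lambda_{G,G}\cong k$ as $G$-modules, and it suffices to treat the latter.

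For the finite cases I would route everything through Lemma~\ref{Radford.lem}, which reduces triviality of $\lambda_{G,G}$ to symmetry of $k[G]^*$. Case~(1) is then tautological. For~(3) one cites the classical symmetry of the group algebra $kG$. For~(2) I would check symmetry of $k[G]^*$ after the faithfully flat extension to a separable closure, over which $G$ becomes constant, reducing to~(3); equivalently, triviality of the character $\lambda_{G,G}$ descends along that extension. For~(7), linear reductivity says the category of $G$-modules is semisimple, i.e. $k[G]^*$ is a semisimple $k$-algebra; as a finite-dimensional semisimple algebra is a product of matrix algebras over finite-dimensional division algebras, each of which is symmetric over $k$, it is symmetric, and Lemma~\ref{Radford.lem} applies. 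For finite abelian $G$ one notes $k[G]^*$ is commutative, hence a commutative Frobenius algebra, hence symmetric.

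Cases~(4) and~(5) I would treat geometrically, using $\lambda_{G,G}\cong\det_{\mathfrak g}^*$ for smooth $G$. In~(5), abelian, the adjoint action of $G$ on itself is trivial, so $G\ad$ carries the trivial $G$-action; by functoriality of the twisted-inverse construction the equivariant canonical module then carries the trivial equivariant structure, and $\lambda_{G,G}=e^*\omega_{G\ad}$ is trivial with no smoothness hypothesis. In~(4), connected reductive, I would show $\det_{\mathfrak g}$ is trivial by restricting to a maximal torus $T$: the weights of the adjoint representation are $0$ with multiplicity $\dim T$ together with the roots, which occur in pairs $\pm\alpha$, so they sum to $0$ and $\det_{\mathfrak g}$ is trivial on $T$; since restriction $X^*(G)\hookrightarrow X^*(T)$ is injective for connected reductive $G$, $\det_{\mathfrak g}$ is trivial on $G$ (base changing to split the group if necessary).

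The main obstacle is~(6), in which only $G^\circ$ is assumed linearly reductive; this already contains~(2),~(3), and~(7), and my proof of it will in fact invoke~(2). A finite connected linearly reductive group scheme is of multiplicative type, so $G^\circ$ is diagonalizable and in particular abelian. By the argument of~(5) the restriction of $\lambda_{G,G}$ to $G^\circ$ is trivial---this restriction coincides with $\lambda_{G^\circ,G^\circ}$ because $e^*\omega_{G\ad}$ only sees a neighborhood of the unit, which lies in the open-and-closed $G^\circ$---so $\lambda_{G,G}$ factors through the \'etale quotient $Q=G/G^\circ$. I would then establish a modular-character formula for the extension $1\to G^\circ\to G\to Q\to 1$, writing $\lambda_{G,G}$ as $\lambda_{Q,Q}$ twisted by the character through which $G$ (necessarily through $Q$, as $G^\circ$ is abelian) acts by conjugation on the one-dimensional space of integrals of $k[G^\circ]$. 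Since $\lambda_{Q,Q}$ is trivial by~(2), it remains to see the twist is trivial: $Q$ acts on $\int_{G^\circ}$ through automorphisms of the finite diagonalizable group $G^\circ$, and the integral of $k[G^\circ]$---the sum over its grouplike elements---is manifestly fixed by every such automorphism. Finiteness of $G^\circ$ is essential: for a positive-dimensional normal subgroup this twist can be nontrivial, which is precisely the source of the $O(2)$ counterexample recalled in the introduction. Making the extension formula precise---identifying an integral of $k[G]$ in terms of those of $k[G^\circ]$ and $k[Q]$ and tracking the conjugation action on it---is the technical heart of the argument.
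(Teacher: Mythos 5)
Your proposal is essentially correct, and much of it runs parallel to the paper's proof: the reduction from $\tilde G$-triviality to $G$-triviality (which the paper leaves implicit), case (1) via Lemma~\ref{Radford.lem}, case (4) via the root-pairing argument on a maximal torus together with injectivity of restriction of characters to $T$, and case (5) via triviality of the adjoint action are the paper's arguments almost verbatim. Where you genuinely diverge, your routes are valid and sometimes cleaner. For (2) the paper uses smoothness: an \'etale group has $\frak g=0$, so $\lambda_{G,G}=\det_{\frak g}^*$ is a $0$th exterior power, hence trivial; your descent to the separable closure plus the constant case also works, since triviality of a character is detected after faithfully flat base change. For (3) the paper deduces it from (2), while you cite symmetry of $kG$ directly (an alternative the paper itself mentions). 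For (7) the paper cites \cite[Lemma~2.2]{Has3} to get $G^\circ$ linearly reductive and then invokes (6); your observation that linear reductivity of a finite $G$ means $k[G]^*$ is a semisimple algebra, and that finite-dimensional semisimple algebras are symmetric (Eilenberg--Nakayama), reduces (7) to (1) directly and is a genuine shortcut.

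The one real weakness is (6), where you defer as ``the technical heart'' a modular-character formula $\lambda_{G,G}\cong\lambda_{Q,Q}\otimes(\text{twist})$ for the extension $1\to G^\circ\to G\to Q\to 1$. That formula could be made precise, but it is unnecessary: your own localization observation already does the whole job if you keep the full $G$-equivariant structure instead of only restricting to $G^\circ$. Since $G^\circ\ad$ is a $G$-stable open (and closed) neighborhood of $e$ in $G\ad$, one has $\omega_{G\ad}|_{G^\circ}\cong\omega_{G^\circ\ad}$ $G$-equivariantly, hence $\lambda_{G,G}\cong\lambda_{G,G^\circ}$ as $G$-modules, not merely as $G^\circ$-modules. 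Then the $G$-equivariant Hopf-module decomposition $k[G^\circ\ad]^*\cong\int^l_{k[G^\circ]}\otimes k[G^\circ\ad]$ identifies $\lambda_{G,G}$ with the conjugation action on the one-dimensional space of integrals, and automorphism-invariance finishes the proof, with no appeal to $\lambda_{Q,Q}$ or to case (2); this is exactly the paper's argument. Note also a small slip in your invariance step: the space that matters is the integrals \emph{on} $k[G^\circ]$, i.e., the line in $k[G^\circ]^*=(kM)^*$ spanned by the projection $\pi$ onto the coefficient of $1_M$, not the sum of the grouplike elements, which is the integral \emph{in} $kM$ itself. Both lines are visibly fixed by automorphisms of $M$, so your conclusion is unaffected here, but in a general twist formula the two choices give different characters, so the distinction is not cosmetic.
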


\begin{proof}
  By Lemma~\ref{Radford.lem}, (1) is already proved, and it suffices to show that $\lambda_{G,G}\cong k$
  for (2)--(7).

  For the case that (2), (3), or (4) is assumed, $G$ is $k$-smooth, and hence $\lambda_{G,G}=\det_{\frak g^*}=\bigwedge^{\mathrm{top}}\frak g^*$.
  As the $0^{\mathrm{th}}$ exterior power is always trivial, (4) has been proved.
  The assertion (3) is a special case of (2) (also, direct proofs are well-known, see for example,
  \cite[Example~IV.2.6]{SY}).

  We prove (4).
  We may assume that $k$ is algebraically closed.
  Let $T$ be a maximal torus of $G$.
  As $\lambda_{G,G}=\det_{\frak g}^*$ is one-dimensional,
  it suffices to show that $\det_{\frak g}$ is trivial as a $T$-module.
  We have $\frak g=\frak g_0\oplus \bigoplus_{\alpha\in\Phi}\frak g_\alpha$,
  where $\Phi$ is the set of roots (that is, nonzero weights of $\frak g$).
  It is known that $\dim\frak g_\alpha=1$ for each $\alpha\in\Phi$, and $\Phi=-\Phi$.
  Thus $\det_{\frak g}$ has weight $0+\sum_{\alpha\in\Phi}\alpha=0$.

  We prove (5).
  If $G$ is abelian, then the action of $G$ on $G\ad$ is trivial.
  So $\omega_{G\ad}$ and $\lambda_{G,G}$ are $G$-trivial.

  We prove (6).
  We may assume that $k$ is algebraically closed of characteristic $p>0$.
  By \cite[(3.11)]{Sweedler2}, $G^\circ=\Spec kM$ for some abelian $p$-group $M$.
  Note that $\pi^0(G)=G\red$ is a closed subgroup scheme, and is a constant finite group.
  Note that $G=G^\circ \rtimes G\red$ is a semidirect product.
  As $G\red$ acts on $G^\circ$ by the adjoint action, it acts on the character group $\chi(G^\circ)=M$.
  As the action is that of groups, it fixes the unit element $1_M$ of $M$.
  As a $G^\circ$-module, $k[G^\circ_r]=k[G^\circ_l]$ is decomposed into the sum of one-dimensional
  $G^\circ$-modules as $\bigoplus_{m\in M}k\cdot m$.
  Note that $k\cdot m$ is isomorphic to $k$ if and only if $m=1_M$, and that $\int^r_{k[G^\circ]}=\int^l_{k[G^\circ]}$
  is generated by the projection $\pi:k[G]\rightarrow k$ given by $\pi(1_M)=1$ and $\pi(m)=0$ for $m\in M\setminus\{1_M\}$.
  As $gm\neq 1_M$ if $m\neq 1_M$ and $g1_M=1_M$ for any $g\in G\red$, we have that $g\pi=\pi$ for any $g\in G\red$,
  where $(g\pi)(m)=\pi(g^{-1}(m))$.
  This shows that $\lambda_{G,G^\circ}\cong k$.
  As $G^\circ$ is a $G$-stable (closed and) open neighborhood of the unit element $e$ in $G\ad$,
  we have that $\lambda_{G,G}=\lambda_{G,G^\circ}\cong k$, as desired.

  We prove (7).
  By \cite[Lemma~2.2]{Has3}, $G^\circ$ is linearly reductive.
  By (6), the assertion is clear now.
\end{proof}

\begin{example}[{cf.~\cite[p.~51]{Knop}}]
  $\lambda_{G,G}$ is not $G$-trivial in general, even if $k$ is an algebraically closed field of
  characteristic zero, and $G$ is $k$-smooth.
  Let $k=\Bbb C$, and consider
  \[
  G=O_2=\{A\in\GL_2(\Bbb C)\mid {}^tAA=E_2\},
  \]
  where $E_2$ is the identity matrix.
  Then the Lie algebra $\frak g$ of $G$ is
  \[
  \{B\in\gl_2(\Bbb C)=\Mat_2(\Bbb C)\mid {}^tB+B=O\},
  \]
  on which $G$ acts by the action $(A,B)\mapsto AB{}^tA$.
  It is easy to see that the action is nontrivial, and hence $\lambda_{G,G}=\frak g^*$ is also nontrivial.
\end{example}

\begin{example}
  $\lambda_{G,G}$ is not $G$-trivial in general, even if $G$ is finite.
  Consider the restricted Lie algebra (see \cite[(V.7)]{Jacobson} for definition)
  $L$ over a field $k$ of characteristic $p>0$
  with the basis $e,f$ with the relations $[f,e]=e$, $f^p=f$, and $e^p=0$.
  Take the restricted universal enveloping algebra $V$ of $L$.
  Letting each element of $x\in L$ primitive (i.e., $\Delta(x)=x\otimes 1+1\otimes x$),
  $V$ is a $p^2$-dimensional cocommutative Hopf algebra which is not unimodular,
  see \cite[p.~85]{LS}.
  Letting $G=\Spec V^*$, we have that $\lambda_{G,G}$ is not trivial by Lemma~\ref{Radford.lem}.
\end{example}

\begin{lemma}\label{pre-q-Gor.lem}
  Let $k$ be a field, $G$ and $H$ be affine $k$-group schemes of finite type, and
  $\tilde G=G\times H$.
  Let $S$ be a $\tilde G$-algebra, and assume that the action of $G$ on $S$ is small.
  We assume that $S$ is normal, and $\lambda_{G,G}$ is trivial.
  Let $L$ be an $(H,A)$-module which is projective as an $A$-module.
  Then the following are equivalent:
  \begin{enumerate}
  \item[\rm(1)] $\omega_S\cong S\otimes_A L$ as $(\tilde G,S)$-modules;
  \item[\rm(2)] $\omega_A\cong L$ as $(H,A)$-modules,
  \end{enumerate}
  where the action of $G$ on $L$ is trivial.
\end{lemma}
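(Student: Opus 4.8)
The plan is to realize the canonical map $\varphi:\Spec S\to\Spec A$ as a $\tilde G$-enriched almost principal $G$-bundle and feed it into Lemma~\ref{Watanabe.lem}, after which everything reduces to bookkeeping of invariants and double duals. First I would check the hypotheses of Lemma~\ref{Watanabe.lem}: smallness of the $G$-action says precisely that $\varphi$ is an almost principal $G$-bundle, $S$ is normal by assumption, and $A=S^G$ is normal as well, which is a standard consequence of smallness together with normality of $S$ in the almost-bundle framework of \cite{Has4}. The structure map $\O_Y\to(\varphi_*\O_X)^G$ is the identity $A\to S^G=A$, so Lemma~\ref{Ref-equiv.lem} is available too.

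The key reduction is to upgrade the hypothesis ``$\lambda_{G,G}$ is trivial'' to ``$\lambda_{\tilde G,G}\cong k$ as $\tilde G$-modules,'' which is what Lemma~\ref{Watanabe.lem} actually requires. Since $\tilde G=G\times H$ is a direct product, $H$ centralizes $G$, so the adjoint action of $\tilde G$ on $G\ad$ is carried by the $G$-factor alone and $H$ acts trivially on $G\ad$. Consequently $\lambda_{\tilde G,G}$ is just $\lambda_{G,G}$ with $H$ acting trivially, and the $G$-triviality of $\lambda_{G,G}$ yields $\lambda_{\tilde G,G}\cong k$. With this in hand, the special case of Lemma~\ref{Watanabe.lem} provides the two formulas $\omega_S\cong(S\otimes_A\omega_A)^{**}$ as $(\tilde G,S)$-modules and $\omega_A\cong\omega_S^G$ as $(H,A)$-modules, on which the whole equivalence will rest.

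Then I would establish the two implications directly. For (2)$\Rightarrow$(1): substituting $\omega_A\cong L$ into the first formula gives $\omega_S\cong(S\otimes_A L)^{**}$; since $L$ is coherent (being isomorphic to $\omega_A$) and projective over $A$, the module $S\otimes_A L$ is a direct summand of a finite free $S$-module, hence finitely generated projective, in particular reflexive over $S$, so $(S\otimes_A L)^{**}\cong S\otimes_A L$ and we conclude $\omega_S\cong S\otimes_A L$. For (1)$\Rightarrow$(2): substituting $\omega_S\cong S\otimes_A L$ into the second formula gives $\omega_A\cong(S\otimes_A L)^G$; because $G$ acts trivially on $L$ and $L$ is flat over $A$, taking $G$-invariants commutes with $-\otimes_A L$ (the $G$-coaction factors through $S$ and $-\otimes_A L$ preserves the defining equalizer), so $(S\otimes_A L)^G\cong S^G\otimes_A L=A\otimes_A L=L$, giving $\omega_A\cong L$. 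In each step the isomorphisms are equivariant because tensoring up, double-dualizing, and taking $G$-invariants are functorial operations respecting the $(\tilde G,S)$- and $(H,A)$-structures, with $G$ acting trivially on both $L$ and $\omega_A$.

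The main obstacle I anticipate is not a single hard estimate but the careful equivariant bookkeeping: tracking the $H$-action correctly through every step (notably in the reduction of the Knop character and in the identification $(S\otimes_A L)^G\cong L$), and justifying that $-\otimes_A L$ commutes with $(-)^G$, which is exactly where flatness of $L$ enters. The one point resting on external input, rather than on the formal manipulations above, is the normality of $A$ needed to invoke Lemma~\ref{Watanabe.lem}.
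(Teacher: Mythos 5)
Your proof is correct and follows essentially the same route as the paper's: both apply Lemma~\ref{Watanabe.lem} (with trivial Knop character) to the almost principal $G$-bundle $\Spec S\to\Spec A$ to obtain $\omega_A\cong\omega_S^G$ and $\omega_S\cong(S\otimes_A\omega_A)^{**}$, and then conclude via $(S\otimes_A L)^G\cong S^G\otimes_A L\cong L$ (using $G$-triviality and flatness of $L$) and $(S\otimes_A L)^{**}\cong S\otimes_A L$ (using projectivity of $L$). Your explicit upgrade of the hypothesis $\lambda_{G,G}\cong k$ to $\lambda_{\tilde G,G}\cong k$ via the direct-product structure of $\tilde G=G\times H$, and your flagging of the normality of $A$, fill in details the paper leaves implicit, and both are handled correctly.
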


\begin{proof}
  (1)$\Rightarrow$(2).
  By Lemma~\ref{Watanabe.lem},
  $\omega_A\cong\omega_S^G\cong (S\otimes_AL)^G\cong A\otimes_AL\cong L$,
  since $L$ is $G$-trivial and $A$-flat.

  (2)$\Rightarrow$(1).
  We have $\omega_S^G\cong\omega_A\cong L$.
  Applying the functor $(S\otimes_A-)^{**}$, which is the quasi-inverse of $(-)^G:\Refl(\tilde G,S)\rightarrow\Refl(H,A)$,
  we get isomorphisms
  \[
  \omega_S\cong (S\otimes_A\omega_S^G)^{**}\cong (S\otimes_AL)^{**}\cong S\otimes_A L
  \]
  of $(\tilde G,S)$-modules.
\end{proof}

\begin{theorem}\label{main.thm}
  Let $k$ be a field, $G$ be an affine $k$-group scheme of finite type, and $V$ be a small $G$-module
  of dimension $n<\infty$.
  We assume that $\lambda_{G,G}$ is trivial.
  Let $H=\Bbb G_m$ be the one-dimensional torus, and let $V$ be of degree one as an $H$-module
  so that $S=\Sym V^*$ is a $\tilde G$-algebra generated by degree one elements,
  where $\tilde G=G\times H$.
  We set $A=S^G$.
  Then we have
  \begin{enumerate}
    \item[\rm(i)] $\omega_A\cong\omega_S^G$ as $(H,A)$-modules;
    \item[\rm(ii)] $a(A)\leq -n$ in general, where $a(A)$ denotes the $a$-invariant.
  \end{enumerate}
  Moreover, the following are equivalent:
  \begin{enumerate}
  \item[\rm(1)] The action $G\rightarrow\GL(V)$ factors through $\SL(V)$;
  \item[\rm(2)] $\omega_S\cong S(-n)$ as $(\tilde G,S)$-modules;
  \item[\rm(3)] $\omega_S\cong S$ as $(G,S)$-modules;
  \item[\rm(4)] $\omega_A\cong A(-n)$ as $(H,A)$-modules;
  \item[\rm(5)] $A$ is quasi-Gorenstein;
  \item[\rm(6)] $A$ is quasi-Gorenstein and $a(A)=-n$;
  \item[\rm(7)] $a(A)=-n$.
  \end{enumerate}
\end{theorem}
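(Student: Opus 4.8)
\noindent\emph{Proof proposal.}
The plan is to reduce everything to the explicit equivariant canonical module of the polynomial ring $S$ and to transport isomorphisms back and forth between $S$ and $A$ through the reflexive‑module equivalence of Lemma~\ref{Ref-equiv.lem}. First I would record the standing geometric facts. Since $V$ is small, $\varphi\colon\Spec S\to\Spec A$ is a $\tilde G$-enriched almost principal $G$-bundle, $S$ is a normal (polynomial) ring, $A=S^G$ is normal, and $\mathcal O_{\Spec A}\to(\varphi_*\mathcal O_{\Spec S})^G$ is the identity isomorphism. Because $\tilde G=G\times H$ with $H$ centralizing $G$, the factor $H$ acts trivially on $G\ad$; hence $\lambda_{\tilde G,G}$ is $H$-trivial and restricts to $\lambda_{G,G}$ on $G$, so the hypothesis that $\lambda_{G,G}$ is trivial gives $\lambda_{\tilde G,G}\cong k$. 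Lemma~\ref{Watanabe.lem} then applies and yields $\omega_A\cong(\omega_S\otimes_k\lambda_{\tilde G,G}^*)^G\cong\omega_S^G$ as $(H,A)$-modules, which is precisely (i), together with its companion $\omega_S\cong(S\otimes_A\omega_A)^{**}$.

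The second ingredient I would set down is the equivariant shape of $\omega_S$. As $S$ is a polynomial ring with $S_1=V^*$, the standard computation $\omega_S=\Omega^n_{S/k}$ gives $\omega_S\cong S(-n)\otimes_k\det_{V^*}$ as $(\tilde G,S)$-modules, where $\det_{V^*}=\bigwedge^nV^*$ is the one‑dimensional $G$-module with trivial $H$-action and the free generator sits in $H$-degree $n$. From this, (ii) is immediate: by (i), $\omega_A\cong(S(-n)\otimes_k\det_{V^*})^G$, whose degree-$d$ component vanishes for $d<n$ because $S_{d-n}=0$ there; since the $a$-invariant is recovered from the canonical module as $a(A)=-\min\{d:(\omega_A)_d\neq 0\}$ (the top local cohomology being the graded dual of $\omega_A$), we conclude $a(A)\le -n$.

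For the equivalences I would run two short chains. The chain (1)$\Rightarrow$(2)$\Rightarrow$(3)$\Rightarrow$(1) is read off $\omega_S\cong S(-n)\otimes_k\det_{V^*}$: factoring $G\to\GL(V)$ through $\SL(V)$ is exactly the triviality of $\det_{V^*}$ as a $G$-module, which in turn is equivalent to $\omega_S\cong S(-n)$ as $(\tilde G,S)$-modules and, after forgetting the $H$-grading, to $\omega_S\cong S$ as $(G,S)$-modules. Next, (2)$\iff$(4) is Lemma~\ref{pre-q-Gor.lem} applied to the projective $(H,A)$-module $L=A(-n)$, for which $S\otimes_A A(-n)=S(-n)$. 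The implications (4)$\Rightarrow$(5) and (6)$\Rightarrow$(7) are trivial. Finally (7)$\Rightarrow$(1): by (ii) the bottom of $\omega_A$ is in degree $n$, and $(\omega_A)_n\cong(\det_{V^*})^G$, which is nonzero precisely when $\det_{V^*}$ is $G$-trivial; thus $a(A)=-n$ forces (1).

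The one genuinely substantive step is (5)$\Rightarrow$(6), and I expect it to be the main obstacle: there I am given only that $\omega_A\cong A(a)$ for an a priori unknown shift $a$, and I must pin down both $a=-n$ and the $\SL$-condition. The plan is to push this isomorphism up to $S$ through the quasi-inverse $\mathcal F=(S\otimes_A-)^{**}$ of Lemma~\ref{Ref-equiv.lem}; since $\mathcal F(\omega_A)\cong\omega_S$ by (i), I obtain $\omega_S\cong\mathcal F(A(a))\cong S(a)$ as $(\tilde G,S)$-modules with $G$ acting trivially on the free generator. Comparing with $\omega_S\cong S(-n)\otimes_k\det_{V^*}$ reduces to the observation that two rank‑one free $(\tilde G,S)$-modules $S\otimes_k L_1$ and $S\otimes_k L_2$ are isomorphic only if $L_1\cong L_2$ as $\tilde G$-modules: an $S$-linear $\tilde G$-isomorphism must send generator to a unit multiple of generator, the only units of $S$ are nonzero scalars, and so the generating lines match equivariantly. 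Matching $H$-degrees forces $a=-n$ and matching $G$-actions forces $\det_{V^*}$ trivial, giving simultaneously $a(A)=-n$ and (1); this is (6) (and re‑proves (5)$\Rightarrow$(1)). Assembling the two chains then closes the equivalence. I expect this comparison‑of‑generating‑lines argument, together with the careful bookkeeping of the $H$-degree of $\det_{V^*}$, to be the only delicate points.
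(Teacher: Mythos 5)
Your proposal is correct and follows essentially the same route as the paper: part (i) via Lemma~\ref{Watanabe.lem}, part (ii) and the implications among (1), (2), (3), (4), (7) via the explicit form $\omega_S\cong S(-n)\otimes_k\det_{V^*}$, and the quasi-Gorenstein step via "graded rank-one projective implies $\omega_A\cong A(a)$" combined with Lemma~\ref{pre-q-Gor.lem}. The only cosmetic difference is that you close the cycle by pinning down $a=-n$ directly through your comparison-of-generating-lines argument, whereas the paper forgets the grading to land in (3) and then cycles back through (1); your unit-multiple argument is the same computation as the paper's $S/S_+\otimes_S(-)$ step.
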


\begin{proof}
  The assertion (i) is clear by Corollary~\ref{Watanabe.lem}.
  We prove (ii).
  We have an $(H,A)$-linear isomorphism $\omega_A\rightarrow\omega_S^G$ by assumption, and $\omega_S^G
  \subset\omega_S=S\otimes_k \det_V$.
  So $a(A)\leq a(S)=-n$ in general.
  The equality holds only if $\det_V$ is trivial.
  Namely, we have (7)$\Rightarrow$(1).
  
  (1) is equivalent to say that $\det_V\cong k(-n)$.
  Combining this with the fact $\omega_S\cong S\otimes_k \det_V$, we get (1)$\Rightarrow$(2).

  (2)$\Rightarrow$(3) is trivial.

  (3)$\Rightarrow$(1).
  $S\otimes_k\det_V\cong \omega_S\cong S$ as $(G,S)$-modules.
  So
  \[
  \textstyle\det_V\cong S/S_+\otimes_S(S\otimes_k\det_V)\cong S/S_+\otimes_S\omega_S\cong S/S_+\otimes_SS\cong S/S_+\cong k
  \]
  as $G$-modules.
  This shows (1).

  (2)$\Rightarrow$(4).
   $\omega_A\cong\omega_S^G\cong S(-n)^G\cong A(-n)$ as $(H,A)$-modules.

  (4)$\Rightarrow$(6)$\Rightarrow$(5) is trivial.

  (5)$\Rightarrow$(3).
  By assumption, $\omega_A$ is projective.
  As $A$ is positively graded and $\omega_A$ is a graded finitely generated module of rank one, we have that
  $\omega_A\cong A(a)$ for some $a\in\ZZ$.
  By Lemma~\ref{pre-q-Gor.lem}, we have that $\omega_S\cong S\otimes_AA(a)\cong S(a)$ as $(\tilde G,S)$-modules.
  Forgetting the grading, we have that $\omega_S\cong S$ as $(G,S)$-modules, as desired.

  (6)$\Rightarrow$(7) is trivial.
\end{proof}

\begin{remark}
  Goel--Jeffries--Singh \cite{GJS} proved better theorems than
  Theorem~\ref{main.thm} for the case that $G$ is finite and constant.
  They proved the inequality $a(A)\leq -n$ without
  assuming that the action is small.
  They also prove there that $a(A)=-n$ implies that the action is small (and hence $G\subset\SL(V)$),
  see \cite[Proposition~4.1, Theorem~4.4]{GJS}.
  The author does not know if these are true for a general finite group scheme $G$.

  The equivalence (1)$\Leftrightarrow$(5) for the case that $G$ is finite and constant
  was first proved by Fleischmann and Woodcock \cite{FW} and Braun \cite{Braun}.
  The author proved that $\omega_S^G\cong \omega_A$ if $G$ is finite linearly reductive, without assuming that
  the action is small \cite[(32.4)]{Has2}.
  The equivalence (1)$\Leftrightarrow$(5) for the case that $G$ is finite linearly reductive
  was proved by Liedtke--Yasuda \cite[Proposition~4.7]{LY} 
  ($A$ is strongly $F$-regular this case, and hence quasi-Gorenstein is equivalent to Gorenstein there).
\end{remark}

\begin{example}
  We give an example of higher-dimensional $G$.
  Let $m$, $n$ and $t$ be positive integers such that $2\leq t\leq m\leq n$.
  Let $W_1=k^n$, $W_2=k^m$, $E=k^{t-1}$, and $G=\GL(E)$.
  We consider that $G$ acts on $E$ as a vector representation, while the actions of $G$ on $W_1$ and $W_2$
  are trivial.
  We set $V=\Hom_k(E,W_2)\oplus \Hom_k(W_1,E)$, $S=\Sym V^*$, and $A=S^G$.
  We define $X=V=\Hom(E,W_2)\times\Hom(W_1,E)=\Spec S=E^n\times (E^*)^m$, and $Y=\Spec A=X/\!/G$.
  The quotient map $\pi:X\rightarrow Y$ is identified with the map
  $\Pi:X\rightarrow Y_t$ given by $(\varphi,\psi)\mapsto\varphi\circ\psi$,
  where $Y_t=\{\rho\in\Hom(W_1,W_2)\mid\rank\rho<t\}$ is the determinantal variety,
  see \cite{DP}.
  Note that $\Pi$ is a $\GL(W_1)\times G\times\GL(W_2)$-enriched almost principal $G$-bundle,
  see \cite{Has1}.
  So by the theorem, we have that $a(A)\leq a(S)=-(m+n)(t-1)$,
  and the equality holds if and only if $A$ is Gorenstein.
  Note also that the usual grading of $A=k[\Hom(W_1,W_2)^*]/I_t$, where $I_t$ is the determinantal ideal,
  is the one such that each element of $\Hom(W_1,W_2)^*$ is of degree one.
   However, the grading used here is the one which is inherited from the grading of $S$,
    and each element of $\Hom(W_1,W_2)^*$ is of degree two.
    For the case that $k$ is of characteristic zero, Lascoux's resolution \cite{Lascoux} tells us that
    $a(A)=2(-mn+n(m-t+1))=-2n(t-1)\leq -(m+n)(t-1)=a(S)$, doubling the degree to adopt our grading inherited from $S$.
    Being a graded ASL over a distributive lattice, $A$ is Cohen--Macaulay, and the Hilbert series of $A$ is independent of $k$, see \cite{BH}.
    So $a(A)$ is also independent of $k$, and we always have $a(A)=-2n(t-1)$.
    So $a(A)=a(S)$ if and only if $m=n$.
    This shows that $A$ is Gorenstein if and only if $m=n$,
    and this is the well-known theorem by Svanes \cite{Svanes}.
\end{example}

\begin{example}
  Let $k$ be an algebraically closed field of characteristic $p>0$, and $\ell$ be a prime number
  which does not divide $p(p-1)$.
  In particular, $\ell$ is odd.
  Let
  \[
  G=\left.\left\{
  \begin{bmatrix}
    t & \alpha \\
    0 & 1
  \end{bmatrix}\,
  \right |\,
  t\in\boldsymbol\mu_\ell,\;
  \alpha\in\boldsymbol\alpha_p
  \right\},
  \]
  where $\boldsymbol{\alpha}_p=\Spec k[a]/(a^p)$ is the first Frobenius kernel of the additive group $\Bbb G_a=\Spec k[a]=\Bbb A^1$,
  and $\boldsymbol\mu_\ell=\Spec k[T]/(T^\ell-1)\subset\GL_1$.
  Note that $G$ acts on the vector representation $W=k^2$ in a natural way.
  Let $V=W\oplus W^*$.
  It is easy to see that $V$ is small and $G\subset\SL(V)$.
  It is also easy to see that $\lambda_{G,G}=\soc k[(\boldsymbol{\alpha}_p)\ad]^*=(\soc k[a]/(a^p))^*=(k a^{p-1})^*$.
  With the adjoint action, we have $\sigma_t\cdot a= t^{-2}a$, where $\sigma=\begin{bmatrix} t & 0 \\ 0 & 1
  \end{bmatrix}$.
  By assumption, $\lambda_{G,G}$ is nontrivial.
  By \cite[Corollary~11.22]{Has4}, $A=S^G$ is not quasi-Gorenstein, where $A=k[V]=\Sym V^*$.
\end{example}

\paragraph Let $G$ be a finite $k$-group scheme, $H$ a $k$-group scheme of finite type, and set $\tilde G=G\times H$.
Let $S$ be a $\tilde G$-algebra, and $A=S^G$.
In \cite{CR}, the trace map $\Tr_{S/A}:S\rightarrow A$ is defined.
Let $\delta_G:k[G]\rightarrow k$ be a non-zero left integral (that is, $\delta_G\in\int^l_{k[G]}\setminus\{0\}$).
This is equivalent to say that $\delta_G\in\Hom_G(k[G_l],k)\setminus\{0\}$.
For any $G$-algebra $S$, let $\Tr_{S/A}:S\rightarrow S'$ be the composite
\[
S\xrightarrow{\omega_S}S'\otimes_k k[G]\xrightarrow{1_{S'}\otimes \delta_G}S'\otimes_k k=S',
\]
where $S'$ is the $A$-module $S$ with the trivial $G$-action.
By \cite[Definition-Proposition~3.6]{CR}, the image of $\Tr_{S/A}$ is contained in $A=S^G$, and hence
the map $\Tr_{S/A}:S\rightarrow A$ is induced.
It is easy to see that $\Tr_{S/A}$ is $A$-linear.

\paragraph
Assume that the Hopf algebra $k[G]^*$ is unimodular.
Then $\delta_G$ is also a right integral.
That is, $\delta_G:k[G_r]\rightarrow k$ is $G$-linear (note that $k[G_r]$ is a $k[G]$-comodule algebra
letting the coproduct $\Delta:k[G_r]\rightarrow k[G_r]\otimes k[G_r]$ the coaction).
As $\omega_S:S\rightarrow S'\otimes_k k[G_r]$ is also $G$-linear, we have that
$\Tr_{S/A}:S\rightarrow A$ is $(G,A)$-linear.
Moreover, $\delta_G$ is $H$-linear, since $H$ acts trivially on $G_r$.
Letting the action of $H$ on $S'$ be the same as that on $S$, we have that $\omega_S:S\rightarrow S'\otimes_k k[G_r]$ is
also $H$-linear.
Thus $\Tr_{S/A}$ is $(\tilde G,A)$-linear.

\begin{theorem}\label{CR.thm}
  Let $S$ be a $k$-algebra of finite type.
  Let $G$ be a finite $k$-group scheme, $H$ be a $k$-group scheme of finite type, and $\tilde G=G\times H$.
  Assume that $\tilde G$ acts on $S$.
  If $k[G]^*$ is symmetric and either
  \begin{enumerate}
  \item[\rm(1)] The map $\Spec S\rightarrow \Spec A$ is a $\tilde G$-enriched principal $G$-bundle; or
    \item[\rm(2)] The action of $G$ on $S$ is small, and $S$ satisfies the $(S_2)$-condition,
  \end{enumerate}
    then $\zeta:S\rightarrow \Hom_A(S,A)$ \($s\mapsto (t\mapsto \Tr_{S/A}(st))$\) is an isomorphism of $(\tilde G,S)$-modules.
\end{theorem}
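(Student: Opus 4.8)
The plan is to reduce everything to the Frobenius property of the finite-dimensional Hopf algebra $k[G]$. Since $k[G]^*$ is symmetric it is unimodular, so by Lemma~\ref{Radford.lem} the integral $\delta_G$ is two-sided and $\lambda_{G,G}$ is trivial; the paragraph preceding the theorem already shows that $\Tr_{S/A}$, and hence $\zeta$, is $(\tilde G,S)$-linear. As a $\tilde G$-equivariant $S$-linear map which is bijective is automatically an isomorphism of $(\tilde G,S)$-modules, it suffices to prove that $\zeta$ is bijective. The key local computation is this: taking $A=k$ and $S=k[G]$ with the regular coaction $\Delta$, the defining property of a left integral gives $\sum_{(z)}z_{(1)}\delta_G(z_{(2)})=\delta_G(z)\,1$, so that $\Tr_{k[G]/k}(z)=\delta_G(z)$ and the trace pairing on $k[G]$ is $(x,y)\mapsto\delta_G(xy)$. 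Since $k[G]$ is a finite-dimensional Hopf algebra it is Frobenius, and a nonzero integral of its dual is a nondegenerate Frobenius functional; hence $x\mapsto(y\mapsto\delta_G(xy))$ is a $k$-linear isomorphism $k[G]\xrightarrow{\sim}k[G]^*$. This is the local model for $\zeta$.

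For case~(1), a $\tilde G$-enriched principal $G$-bundle $\varphi\colon\Spec S\to\Spec A$ is finite and faithfully flat because $G$ is finite, so $S$ is a finitely generated projective $A$-module and the natural map $\Hom_A(S,A)\otimes_AS\to\Hom_S(S\otimes_AS,S)$ is an isomorphism. The torsor condition trivializes the bundle after the faithfully flat base change $A\to S$: the isomorphism $\Phi$ identifies $S\otimes_AS\cong k[G]\otimes_kS$. Since $\Tr_{S/A}$ is assembled from the coaction $\omega_S$ and $\delta_G$, both of which are compatible with base change, the map $\zeta\otimes_A\id_S$ is identified with the pairing $x\mapsto(y\mapsto\delta_G(xy))$ of the first paragraph tensored up to $S$, which is an isomorphism by the Frobenius property of $k[G]$. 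By faithful flatness of $A\to S$, the map $\zeta$ itself is an isomorphism.

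For case~(2), I would reduce to case~(1) in codimension one. Because $G$ is finite, $S$ is module-finite over $A$, and both $S$ and $\Hom_A(S,A)$ satisfy $(S_2)$ as $A$-modules (the former by hypothesis, the latter as the $A$-dual of a finite module over the $(S_2)$-ring $A$). Since the action is small there is a large open $V\subseteq\Spec A$ with $\varphi$ restricting to a principal bundle $U\to V$. Let $\fp$ be a prime of $A$ of height $\le1$. As $V$ is large we have $\fp\in V$, and since $V$ is stable under generization, $\Spec A_\fp\subseteq V$; base-changing $U\to V$ along $A_\fp$ yields a principal bundle $U_\fp\to\Spec A_\fp$. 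Now $\dim S_\fp\le1$ because $S$ is finite over $A$, whereas every point of $\Spec S\setminus U$ has height $\ge2$ in $\Spec S$, a height preserved under localization; hence no such point survives in $\Spec S_\fp$, forcing $U_\fp=\Spec S_\fp$. Thus $\Spec S_\fp\to\Spec A_\fp$ is a genuine principal $G$-bundle and $\zeta_\fp$ is an isomorphism by case~(1). A map between $(S_2)$ $A$-modules that is an isomorphism at every prime of height $\le1$ is an isomorphism, so $\zeta$ is an isomorphism.

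The main obstacle I anticipate is the $(S_2)$-bookkeeping of case~(2): one must confirm that $A=S^G$ is $(S_2)$, so that $\Hom_A(S,A)$ is $(S_2)$ and the codimension-one criterion applies, and that the height of a surviving prime is preserved under the localization $S\to S_\fp$, so that no point of the codimension-$\ge2$ locus $\Spec S\setminus U$ meets $\Spec S_\fp$. The base-change compatibility of $\Tr_{S/A}$ used in case~(1) is the other point requiring care, though it is formal once the trace is unwound through $\omega_S$ and $\delta_G$.
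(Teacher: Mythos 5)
Your proposal is correct in strategy, but it is not the paper's proof: the paper proves only the $\tilde G$-linearity of $\zeta$ (exactly as you do, from the discussion preceding the theorem) and, for the bijectivity, simply cites \cite[Corollary~3.13]{CR}. So you are in effect re-deriving the cited result. Your case (1) is sound: $\delta_G$ is a nonzero integral in $k[G]^*$, hence a Frobenius form on $k[G]$ by Larson--Sweedler; the isomorphism $\Phi$ gives a $G$-equivariant algebra trivialization $S\otimes_AS\cong k[G]\otimes_kS$; $\Tr_{S/A}$ base-changes to $\Tr_{k[G]\otimes_kS/S}=\delta_G\otimes\id_S$ because it is assembled from $\omega_S$ and $\delta_G$; and faithfully flat descent along $A\to S$ finishes. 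This is essentially the argument behind the torsor case of \cite{CR}; what your version buys is self-containedness, making visible that the only Hopf-theoretic input is the nondegeneracy of the integral pairing, while the paper's citation buys brevity and delegates all the commutative algebra.

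In case (2), however, there is a genuine gap, and it is the one you flag yourself. The codimension-one criterion you invoke needs (a) $S$ to be $(S_2)$ \emph{as an $A$-module} and (b) $\Hom_A(S,A)$ to have depth $\geq\min(2,\height\fp)$ at each $\fp$; neither follows formally from the hypothesis that the \emph{ring} $S$ is $(S_2)$. For (a): $\depth_{A_\fp}S_\fp=\min_{\fq\mid\fp}\depth S_\fq$ while $\dim_{A_\fp}S_\fp=\max_{\fq\mid\fp}\dim S_\fq$, so you need all primes of $S$ over a fixed $\fp$ to have the same height; this is a real theorem about finite group-scheme quotients, not bookkeeping (one reduces to the infinitesimal part, where $\Spec S\to\Spec S^{G^\circ}$ is a homeomorphism because $S^{p^n}\subseteq S^{G^\circ}$, and to the \'etale part, which after base change to the separable closure is a constant group permuting the primes over $\fp$ transitively, and conjugate primes have equal height). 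For (b): since $\Ass_A\Hom_A(S,A)=\Supp_AS\cap\Ass A=\Ass A$ and $\depth \Hom_A(S,A)_\fp\geq\min(2,\depth A_\fp)$, what you actually need is that $A$ itself is $(S_2)$, which is not among the hypotheses and must be proved. It is true, and can be proved along these lines: largeness of $U$ and the $(S_2)$ property of $S$ give $S=\Gamma(U,\O_U)$, hence $A=S^G=\Gamma(V,(\varphi_*\O_U)^G)=\Gamma(V,\O_V)$ by descent for the torsor $U\to V$; the finite flat map $U\to V$ with $(S_2)$ source forces $V$ to be an $(S_2)$ scheme, and $A=\Gamma(V,\O_V)$ then yields $\depth A_\fp\geq2$ for $\fp\notin V$. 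So your outline does close, but only after this supplementary work on quotients by finite group schemes --- which is precisely the content that the paper's citation of \cite[Corollary~3.13]{CR} is carrying.
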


\begin{proof}
  This is \cite[Corollary~3.13]{CR} except that we need to prove that the map $\zeta$ is $\tilde G$-linear.
  This is done in the discussion above.
\end{proof}

\begin{flushleft}
Mitsuyasu Hashimoto\\
Department of Mathematics\\
Osaka Metropolitan University\\
Sumiyoshi-ku, Osaka 558--8585, JAPAN\\
e-mail: {\tt mh7@omu.ac.jp}
\end{flushleft}

\end{document}